\theoremstyle{plain}
\newtheorem*{thm1}{Theorem 1}
\newtheorem*{corollary1}{Corollary 1}
\newtheorem*{corollary2}{Corollary 2}
\newtheorem*{corollary3}{Corollary 3}
\newtheorem{theorem}{Theorem}[section]
\newtheorem{lemma}[theorem]{Lemma}
\newtheorem{proposition}[theorem]{Proposition}
\newtheorem{prop}[theorem]{Proposition}
\newtheorem{intro-theorem}{Theorem}
\newtheorem{intro-corollary}[intro-theorem]{Corollary}
\newtheorem{intro-proposition}[intro-theorem]{Proposition}
\theoremstyle{definition}
\newtheorem{remark}[theorem]{Remark}
\numberwithin{equation}{section}
\numberwithin{figure}{section}
\newskip\procskipamount
\newskip\interskipamount
\newskip\refskipamount
\newcommand{\procskip}{\vskip\procskipamount}
\newcommand{\interskip}{\vskip\interskipamount}
\newcommand{\refskip}{\vskip\refskipamount}
\newcommand{\procbreak}{\par
   \ifdim\lastskip<\procskipamount\removelastskip
   \penalty-100
   \procskip\fi
   \noindent\ignorespaces}
\newcommand{\titlebreak}{\par%
\ifdim\lastskip<\interskipamount\removelastskip%
\penalty10000%
\interskip\fi%
\noindent}%
\newcommand{\interbreak}{\par%
\ifdim\lastskip<\interskipamount\removelastskip%
\penalty-100%
\interskip\fi%
\noindent\ignorespaces}%
\newcommand{\refbreak}{\par%
\ifdim\lastskip<\refskipamount\removelastskip%
\penalty-100%
\refskip\fi%
\noindent\ignorespaces}%
\newcounter{refcounter}
\newcounter{listcounter}
\newcounter{deflistcounter}
\newcounter{equivcounter}
\newskip{\itemsepamount}
\newskip{\topsepamount}
\newenvironment{assertionlist}{%
  \begin{list}
    {\upshape (\arabic{listcounter})}
    {\setlength{\leftmargin}{18pt}
     \setlength{\rightmargin}{0pt}
     \setlength{\itemindent}{0pt}
     \setlength{\labelsep}{5pt}
     \setlength{\labelwidth}{13pt}
     \setlength{\listparindent}{\parindent}
     \setlength{\parsep}{0pt}
     \setlength{\itemsep}{\itemsepamount}
     \setlength{\topsep}{\topsepamount}
     \usecounter{listcounter}}}
  {\end{list}}
\newenvironment{definitionlist}{%
  \begin{list}
    {\upshape (\alph{deflistcounter})}
    {\setlength{\leftmargin}{18pt}
     \setlength{\rightmargin}{0pt}
     \setlength{\itemindent}{0pt}
     \setlength{\labelsep}{5pt}
     \setlength{\labelwidth}{13pt}
     \setlength{\listparindent}{\parindent}
     \setlength{\parsep}{0pt}
     \setlength{\itemsep}{\itemsepamount}
     \setlength{\topsep}{\topsepamount}
     \usecounter{deflistcounter}}}
  {\end{list}}
\newenvironment{equivlist}{%
  \begin{list}
    {\upshape (\roman{equivcounter})}
    {\setlength{\leftmargin}{18pt}
     \setlength{\rightmargin}{0pt}
     \setlength{\itemindent}{0pt}
     \setlength{\labelsep}{5pt}
     \setlength{\labelwidth}{13pt}
     \setlength{\listparindent}{\parindent}
     \setlength{\parsep}{0pt}
     \setlength{\itemsep}{\itemsepamount}
     \setlength{\topsep}{\topsepamount}
     \usecounter{equivcounter}}}
  {\end{list}}
\renewcommand{\AA}{\mathbb{A}}
\newcommand{\CC}{\mathbb{C}}
\newcommand{\FF}{\mathbb{F}}
\newcommand{\GG}{\mathbb{G}}
\newcommand{\QQ}{\mathbb{Q}}
\newcommand{\RR}{\mathbb{R}}
\newcommand{\ZZ}{\mathbb{Z}}
\renewcommand{\hbar}{\bar{h}}
\newcommand{\Gbf}{{\bf G}}
\newcommand{\Xbf}{{\bf X}}
\newcommand{\Ecal}{{\mathcal E}}
\newcommand{\Gcal}{{\mathcal G}}
\newcommand{\Ocal}{{\mathcal O}}
\newcommand{\Scal}{{\mathcal S}}
\newcommand{\Ascr}{{\mathscr A}}
\newcommand{\Lscr}{{\mathscr L}}
\newcommand{\Ktilde}{\tilde{K}}
\DeclareMathOperator{\Gal}{Gal}
\DeclareMathOperator{\GL}{GL}
\DeclareMathOperator{\GSp}{GSp}
\DeclareMathOperator{\id}{id}
\DeclareMathOperator{\Norm}{Norm}
\newcommand{\One}{\hbox{\rm1\kern-2.3ptl}}
\DeclareMathOperator{\Pic}{Pic}
\newcommand\addots{\mathinner{\mkern1mu\raise0pt\vbox{\kern7pt\hbox{.}}\mkern2mu\raise3pt\hbox{.}\mkern2mu\raise6pt\hbox{.}\mkern1mu}}
\newcommand{\bs}{\backslash}
\newcommand{\leftexp}[2]{{\vphantom{#2}}^{#1}{#2}}
\newcommand{\set}[2]{\{\,#1\ ;\  #2\,\}}
\newcommand{\vdual}{{}^{\vee}}
\newcommand\lto{\longrightarrow}
\newcommand\varto[1]{\mathrel{\hbox to #1pt{\rightarrowfill}}}
\newcommand{\mono}{\hookrightarrow}
\newcommand{\sends}{\mapsto}
\newcommand{\liso}{\overset{\sim}{\lto}}
\newcommand{\GZip}{\mathop{\text{$G$-{\tt Zip}}}\nolimits}
\begin{document}

\title{Sections of the Hodge bundle over Ekedahl-Oort strata of Shimura varieties of Hodge type}

\author{Jean-Stefan Koskivirta}

\maketitle

\abstract{We construct canonical non-vanishing global sections of powers of the Hodge bundle on each Ekedahl-Oort stratum of a Hodge type Shimura variety. In particular we recover the quasi-affineness of the Ekedahl-Oort strata. In the projective case, this gives a very short proof of non-emptiness of Ekedahl-Oort strata. It follows that the Newton strata are also nonempty, by a result of S.Nie. We also deduce the fact that the $\mu$-ordinary locus is determined by the Ekedahl-Oort strata of its image by any embedding.}

\section*{Introduction}

The Siegel modular variety $\mathcal{A}_{g,N}$ arises as a moduli space of $g$-dimensional principally polarized abelian varieties with level-$N$ structure. More generally, Shimura varieties of PEL-type classify abelian varieties endowed with a polarization, an action of a semisimple algebra, and a level structure. These varieties satisfy nice properties due to there nature as moduli spaces. For example, the special fiber of a PEL-Shimura variety at a place of good reduction carries different stratifications (Ekedahl-Oort, Newton, $p$-rank, etc.).
\medskip 

Shimura varieties which can be embedded into a Siegel modular variety are called of Hodge type. In general, they do not have an interpretation as a moduli space. In particular, all PEL-Shimura varieties are of Hodge type. For Hodge type varieties, one can still define the Ekedahl-Oort stratification using the stack of $G$-zips, introduced by Wedhorn, Moonen, Pink, Ziegler in \cite{MoWd}, \cite{PWZ1} and \cite{PWZ2}. More precisely, let $(\Gbf,\mu)$ be a Hodge type Shimura datum and let $S_K$ denote the special fiber of the associated Shimura variety at a place of good reduction. Zhang has constructed in \cite{Zhang_EO} a $G$-zip over $S_K$ and he has shown that the corresponding morphism
$$S_K \to \GZip^{\mu}$$
is smooth. By definition, the Ekedahl-Oort strata of $S_{K}$ are the fibers of $\zeta$. In particular, Zhang proves that the $\mu$-ordinary Ekedahl-Oort stratum is open and  dense. It is also possible to define group theoretically the Newton stratification on $S_K$. Wortmann has shown in \cite{Wort_MuOrd} that the open Newton stratum coincides with the $\mu$-ordinary locus.
\medskip 

Fix an embedding $\iota:S_K\to \mathcal{A}_{g,N}$ of a Hodge type Shimura variety $S_K$ into a Siegel modular variety. Denote by $\Ascr \to S_K$ the pull-back of the universal abelian scheme on $\mathcal{A}_{g,N}$ via $\iota$. Let $e:S_K\to \Ascr$ denote the identity section of $\Ascr$. Then the Hodge bundle is by definition
$$\omega_S:=\det(e^*\Omega_{\Ascr/S_K}).$$
This line bundle is ample on $S_K$. In an upcoming paper by the author and T.Wedhorn, it is proved that this line bundle admits a canonical global section, a generalized Hasse invariant, which vanishes exactly outside the $\mu$-ordinary locus (\cite{KoWd} Theorem 4.12). In this paper we construct sections of $\omega_S$ on each Ekedahl-Oort stratum. Here are our main results:

\begin{thm1} \label{thm1}
Let $S^w$ be a nonempty Ekedahl-Oort stratum. There exists $N_w \geq 1$ such that for all $d\geq 1$, there exists a (canonical) non-vanishing section in the space
$$H^{0}(S^w,\omega_S^{\otimes N_w d}).$$
\end{thm1}

This section is canonical, in the sense that it is a pullback of a non-vanishing global section on a certain substack of the stack of $G$-zips. Therefore it only depends on the choice of the embedding into a Siegel Shimura variety. This non-vanishing section induces an isomorphism $\Ocal_{S^w}\to \omega_S^{\otimes N_w d}$. In other words, Theorem 1 says that the line bundle $\omega_S$ is torsion on $S^w$. This implies that $\Ocal_{S^w}$ is ample on $S^w$, so we recover the following result:

\begin{corollary1}
The Ekedahl-Oort strata are quasi-affine.
\end{corollary1}

As pointed out by Ulrich Görtz and Chia-Fu Yu, this result can also be deduced immediately from the quasi-affineness of the Ekedahl-Oort strata of the Siegel Shimura variety: Each stratum $S^w$ is locally closed in the preimage of the corresponding Siegel Ekedahl-Oort stratum, and is therefore quasi-affine. Let us also mention that by a result of Wedhorn and Yaroslav, the inclusion $S^w\to S$ is an affine morphism (\cite{WdYa} Theorem 4.1). From the quasi-affiness of Ekedahl-Oort strata, we deduce the following:

\begin{corollary2}\label{nonemp}
Let $S_K$ be a Hodge type Shimura variety. Assume that $S_K$ is projective. Then all Ekedahl-Oort and Newton strata are nonempty.
\end{corollary2}

For PEL-Shimura varieties, all Ekedahl-Oort strata are known to be nonempty, by a result of Viehmann and Wedhorn in \cite{ViWd}. For more general Hodge type Shimura varieties, nonemptiness is expected to hold, even though no proof has been given so far.

\medskip

Here is an idea of the proof of Corollary 2. The ampleness of the Hasse bundle and the existence of the Hasse invariant imply that an Ekedahl-Oort of positive dimension cannot be projective. Using an inductive argument, we deduce that the superspecial locus (the Ekedahl-Oort stratum of dimension zero) is nonempty, and then the result is a consequence of the flatness of the map $\zeta$. Theorem 1 has also the following consequence: 

\begin{corollary3} \label{mu}
Let $S^\mu$ be the $\mu$-ordinary Ekedahl-Oort stratum, and let $S_0^\mu$ be the Ekedahl-Oort stratum of the Siegel modular variety $\mathcal{A}_{g,N}$ containing the image of $S^\mu$ by the embedding $\iota:S_K\to \mathcal{A}_{g,N}$. Then one has the equality:
$$S^\mu=\iota^{-1}(S_0^\mu).$$
\end{corollary3}

For example, assume $S$ is a PEL Shimura variety parametrizing tuples $(A,\lambda,\iota,\bar{\nu})$ where $A$ is an abelian variety, $\lambda$ a polarization, $\iota$ an action of a $\ZZ_{(p)}$-algebra, and $\bar{\nu}$ a level structure. Then the $\mu$-ordinary locus of $S$ is entirely determined by the isomorphism class of the $p$-torsion $A[p]$, forgetting the rest of the structure.

We now give an overview of the paper. In the first section we recall the parametrization of Ekedahl-Oort strata using the stack of $G$-zips and the map $\zeta$. Then in the second part we state some general facts about the Picard group of a quotient stack and the space of global sections of a line bundle, which we apply to the stack of $G$-zips. In the third part we construct Hasse invariants on each Ekedahl-Oort stratum. Finally, we prove the corollaries of Theorem 1 in the last subsection.

\subsection*{Acknowledgements}
I would like to thank Torsten Wedhorn for his very useful comments and fruitful conversations. I am also grateful to Ulrich Görtz for some remarks on the paper.

\section{Parametrization of Ekedahl-Oort strata}

\subsection{Shimura varieties of Hodge-type}

We follow the general setup of \cite{KoWd} (4.1). Let $(\Gbf,\Xbf)$ be a Shimura datum of Hodge type as in \cite{De_VarSh}. We denote by $[\mu]$ the $\Gbf(\CC)$-conjugacy class of the component of $h_{\CC}\colon \prod_{\Gal(\CC/\RR)}\GG_{m,\CC} \to \Gbf_{\CC}$ corresponding to $\id \in \Gal(\CC/\RR)$. Let $E$ be the reflex field, i.e the field of definition of $[\mu]$.

We fix an embedding $\iota:(\Gbf,\Xbf)\to (\GSp(V),S^{\pm})$ of Shimura datum, where $V = (V,\psi)$ is a symplectic space over $\QQ$ and $S^{\pm}$ the double Siegel half space. Let $p$ be a prime of good reduction $\Gcal$ a reductive $\ZZ_{(p)}$-model of $\Gbf$, and $K_p := \Gcal(\ZZ_{p})$ the hyperspecial subgroup of $\Gbf(\QQ_p)$. We denote by $G$ the special fiber of $\Gcal$.

Choose a place $v$ of the reflex field $E$ of $(\Gbf,\Xbf)$ over $p$. Let $K = K_pK^p \subset \Gbf(\AA_f)$ be a compact open subgroup. We assume $K^p$ sufficiently small, so that an integral canonical model $\Scal_K(\Gbf,\Xbf)$ over $O_{E,v}$ exists for the Shimura variety attached to $(\Gbf,\Xbf)$ (see \cite{Ki_Integral} and \cite{Vasiu_Integral}). We denote by $S := S_K(\Gbf,\Xbf)$ the special fiber. It is a smooth quasi-projective scheme over the residue field $\kappa := \kappa(v)$. Let $k$ denote the algebraic closure of $\kappa$.

The embedding $\iota$ admits a model over $\ZZ_{(p)}$. More precisely, there is a $\ZZ_{(p)}$-lattice $\Lambda$ of $V$ such that $\iota$ is induced by an embedding $\Gcal \to \GL(\Lambda)$ (\cite{Ki_Integral}~Lemma~(2.3.1)). We may assume that $\psi$ induces a perfect $\ZZ_{(p)}$-pairing on $\Lambda$ (\cite{Ki_Points}~(1.3.3)). We obtain an embedding
\begin{equation}\label{EqEmbGroup}
\iota\colon \Gcal \mono \GSp(\Lambda)
\end{equation}
over $\ZZ_{(p)}$ such that $\Gcal$ is the scheme theoretic stabilizer of a finite set $s$ of tensors in $\Lambda^{\otimes}$. We then can extend the embedding of Shimura varieties in characteristic zero to an embedding
\begin{equation}\label{emb}
\Scal_K(\Gbf,\Xbf)\mono \Scal_{\Ktilde}(\GSp(\Lambda),S^{\pm}) \otimes_{\ZZ_{(p)}} O_{E,v}
\end{equation}
where $\Ktilde := \Ktilde_p\Ktilde^p$, $\Ktilde_p := \GSp(\Lambda)(\ZZ_p)$, and $\Ktilde^p \subset \GSp(\AA^p_f)$ is a certain open compact subgroup. Let $\tilde\Ascr \to \Scal_{\Ktilde}(\GSp(\Lambda),S^{\pm})$ be the universal abelian scheme and let $\Ascr$ be its pullback to $\Scal := \Scal_K(\Gbf,\Xbf)$ via the embedding \eqref{emb}. We define:
\[
\omega := \det(e^*\Omega^1_{\Ascr/\Scal})
\]
where $e$ is the identity section of $\Ascr$, and we call it the \emph{Hodge line bundle on $\Scal$}. The line bundle $\omega$ is ample (\cite{KoWd} Proposition 4.1). We denote by $\omega_S$ the pullback of $\omega$ to the special fiber.

\subsection{The stack of $G$-zips}

The conjugacy class $[\mu^{-1}]$ extends to a conjugacy class over $O_{E_v}$. As $\Gcal$ is quasi-split, there exists a representative defined over $O_{E_v}$. We denote by $\chi$ the reduction over $\kappa$ of this representative. Let $P_{\pm} = P_{\pm}(\chi)$ be the pair of opposite parabolic subgroups of $G_{\kappa}$ attached to the cocharacter $\chi$, with common Levi subgroup $L$ (the centralizer of $\chi$). Then $(G,P_+,\sigma(P_-),\varphi)$ is an algebraic zip datum in the sense of \cite{PWZ1}~10.1, where $\sigma(-)$ denotes the pullback under absolute Frobenius $\sigma\colon x \sends x^p$ and where $\varphi\colon L \to \sigma(L)$ is the relative Frobenius. We set $P:=P_+$, $Q:=\sigma(P_-)$ and $M:=\sigma(L)$, so that $M$ is the Levi subgroup of $Q$ containing $T$.

We may assume, possibly after conjugating $\chi$ over $\kappa$, that there is a Borel pair $(T,B)$ defined over $\FF_p$ such that $B_-\subset P$, and therefore $B\subset Q$ (see \cite{KoWd} Lemma 4.2).
Let $(X,\Phi,X\vdual,\Phi\vdual,\Delta)$ be the based root datum of $(G,B,T)$. Denote by $W = W(G,T) := N_{G}(T)/T$ the Weyl group and by $I \subset W$ the set of simple reflections defined by $B$. The subsets of $I$ correspond bijectively to the parabolic subgroups containing $B$, which are called \emph{standard}. For $J \subset I$, denote by $Q_{J}$ the corresponding standard parabolic and by $M_{J}$ the unique Levi subgroup of $Q_{J}$ containing $T$. We have an inclusion $W_J := W(M_{J},T)\hookrightarrow W(G,T)$
such that $J = W_J \cap I$. Every parabolic subgroup $P'$ of $G$ is conjugate to a unique standard parabolic subgroup $Q_J$ and $J \subset I$ is called the \emph{type of $P'$}.

For $x\in P$, we denote by $\overline{x}$ the image of $x$ in $P/R_{u}(P) = L$, and similarly for the image of $y \in Q$ in $Q/R_u(Q) = M$. The associated \emph{zip group} is defined by
\[
E := \set{(x,y)\in P\times Q}{\varphi(\overline{x})=\overline{y}} 
\]
and $E$ acts on $G$ by $(x,y)\cdot g:= xgy^{-1}$. Note that $\dim(E) = \dim(G)$. By \cite{PWZ1}~Proposition~7.3, there are finitely many $E$-orbits in $G$, which are parametrized as follows. Let $J \subseteq I$ and $K \subseteq I$ be the type of $P$ and $Q$, respectively. For every $w \in W$ we choose a representative $\dot w \in \Norm_G(T)$ such that $(w_1w_2)^{\cdot} = \dot w_1\dot w_2$ whenever $\ell(w_1w_2) = \ell(w_1) + \ell(w_2)$. Let $w_{0,J} \in W_J$ and $w_0 \in W$ be the longest elements and set $g_0 := (w_{0} w_{0,J})^{\cdot}$. By \cite{PWZ1}~Theorem~5.12 and Theorem~7.5 we obtain a bijection
\begin{equation}\label{EqEOrbits}
\leftexp{J}{W} \liso \{\text{$E$-orbits on $G$}\}, \qquad w \sends O^w := E\cdot (g_0\dot{w})
\end{equation}
such that $\dim O^w = \ell(w) + \dim(P)$.

\subsection{Ekedahl-Oort strata}
The algebraic quotient stack over $\kappa$
\begin{equation}\label{EqDefGZipStack}
\GZip^{\chi} := [E \bs G_{\kappa}]
\end{equation}
is called the \emph{stack of $G$-zips}. The underlying topological space of $\GZip^{\chi}$ is homeomorphic to $\leftexp{J}{W}$ endowed with the order topology with respect to a certain partial order $\preceq$; see \cite{PWZ1}~Definition~6.1 for the precise definition.

Zhang has constructed in \cite{Zhang_EO} a $G$-zip of type $\chi$ over $S_K := S_K(\Gbf,\Xbf)$ and he has shown in loc.~cit.\ that the corresponding morphism $S_K \to \GZip^{\chi}$ is smooth. In this paper, we prefer to use the construction given by Wortmann in \cite{Wort_MuOrd}~\S5 and we obtain again a smooth morphism
\begin{equation}\label{EqDefineZeta}
\zeta := \zeta_{G}\colon S_K \lto \GZip^{\chi}.
\end{equation}
The Ekedahl-Oort strata of $S_{K}$ are defined as the fibers of $\zeta$. For $w\in \leftexp{J}{W}$, we denote by $S^w:=\zeta^{-1}(w)$ the corresponding stratum endowed with the reduced scheme structure as a locally closed subset of $S_K$. Then $S^w$ is smooth by \cite{WdYa} and if nonempty, has dimension $\ell(w)$. In the case of PEL Shimura varieties, every Ekedahl-Oort stratum is nonempty (\cite{ViWd}~Theorem~10.1). The map (\ref{EqDefineZeta}) restricts to a smooth map of stacks:
\begin{equation}\label{Zetaw}
\zeta \colon S^w \lto [E \bs O^w].
\end{equation}

\section{Equivariant Picard group}
\subsection{$G$-linearizations}
In this section we consider an arbitrary smooth algebraic group over $k$ acting on a $k$-variety $X$. If $\pi:L\to X$ is a line bundle, a $G$-linearization of $L$ is a map
$$G\times L\to L$$
defining an action of $G$ on $L$, satisfying the conditions:
\begin{enumerate}[(i)]
\item The map $\pi$ is $G$-equivariant.
\item The action of $G$ on $L$ is linear on the fibers.
\end{enumerate}

We denote by $\Pic^{G}(X)$ the group of isomorphism classes of $G$-linearized line bundles on $X$. The image of the natural map $\Pic^{G}(X) \to \Pic(X)$ is the subgroup of $G$-linearizable line bundles, and is denoted by $\Pic_{G}(X)$. The group $\Pic^{G}(X)$ can be identified with the Picard group of the quotient stack $\left[G \bs X\right]$.

We define $\Ecal(X):=\frac{\GG_m(X)}{k^\times}$. If $X$ is an irreducible variety over $k$, it is a free abelian group of finite type.

\begin{prop}\label{exseqgen}
Let $G$ be a smooth algebraic group, and $X$ an irreducible $G$-variety. Then
there is an exact sequence:
\[
1 \to k^{\times} \to \GG_m(X)^G \to \Ecal(X) \to X^{*}(G) \to \Pic^{G}(X) \to \Pic(X)
\]
\end{prop}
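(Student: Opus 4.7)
The plan is to construct each arrow and verify exactness term by term, using Rosenlicht's lemma on units of products of irreducible varieties as the key input. The maps are: the inclusion $k^{\times} \hookrightarrow \GG_m(X)^G$ by constants; the restriction to $G$-invariants of the natural surjection $\GG_m(X) \to \Ecal(X)$; the map $\Ecal(X) \to X^{*}(G)$ sending a class $[f]$ to the character $\chi_f$ determined by $f(g\cdot x) = \chi_f(g) f(x)$; the map $X^{*}(G) \to \Pic^{G}(X)$ sending $\chi$ to the trivial bundle $\Ocal_X$ equipped with the linearization $g\cdot(x,t) = (gx,\chi(g)t)$; and finally the forgetful map $\Pic^{G}(X) \to \Pic(X)$.

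First I would invoke Rosenlicht's lemma: since $G$ and $X$ are irreducible over $k$, every unit $\phi \in \GG_m(G \times X)$ factors as $\phi(g,x) = \alpha(g)\beta(x)$ with $\alpha \in \GG_m(G)$ and $\beta \in \GG_m(X)$, uniquely up to scalar. Applying this to $\phi(g,x) := f(gx)/f(x)$ for $f \in \GG_m(X)$ and normalizing by $\phi(e,x) = 1$ produces a character $\chi_f \in X^{*}(G)$ satisfying $f(gx) = \chi_f(g) f(x)$. This gives well-definedness of $\Ecal(X) \to X^{*}(G)$ on classes modulo $k^{\times}$, and the homomorphism property is immediate.

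The exactness checks then proceed as follows. At $\GG_m(X)^G$: the intersection $k^{\times} \cap \GG_m(X)^G$ is just $k^{\times}$. At $\Ecal(X)$: by construction $\chi_f = 1$ if and only if $f$ is $G$-invariant. At $X^{*}(G)$: the equivariant line bundle $(\Ocal_X,\chi)$ is $G$-equivariantly trivial precisely when it admits a $G$-invariant nowhere-vanishing global section, i.e. a unit $f$ with $f(gx) = \chi(g)f(x)$, which says exactly $\chi_f = \chi$. At $\Pic^{G}(X)$: if $(L,\alpha)$ forgets to a trivial line bundle, choose an isomorphism $L \cong \Ocal_X$ and transport $\alpha$ to a linearization of $\Ocal_X$, encoded by a cocycle $c \in \GG_m(G \times X)$ with $c(e,x)=1$; Rosenlicht's lemma writes $c(g,x) = \chi(g)\beta(x)$, and modifying the trivialization of $\Ocal_X$ by $\beta$ reduces $c$ to the pure character $\chi(g)$, showing $(L,\alpha)$ comes from $\chi$.

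The main obstacle is the appeal to Rosenlicht's lemma and the associated normalization argument; once that is in hand, the remaining verifications are elementary cocycle manipulations. Particular care is needed with the irreducibility hypothesis on $X$ and with conventions about how $G$ acts on sections of $\Ocal_X$ (left versus right), to ensure all identities are consistent.
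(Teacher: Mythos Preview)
Your argument is correct and is essentially the proof in \cite{KKV} that the paper cites; the paper itself gives no independent argument beyond that citation and the remark that the characteristic-zero hypothesis there is unnecessary. Your assumption that $G$ is irreducible is in fact required for Rosenlicht's lemma and hence for the map $\Ecal(X)\to X^*(G)$ to be well-defined (for disconnected $G$ the quotient $f(gx)/f(x)$ need not be constant in $x$); the paper's statement is slightly loose on this point, but in its only application the acting group $E$ is connected.
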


\begin{proof}
See \cite{KKV}, Proposition 2.3 and Lemma~2.2. The assumption that $k$ is of characterstic $0$ is not needed in the proof.
\end{proof}

The map $X^*(G)\to \Pic^{G}(X)$, $\lambda \mapsto \Lscr(\lambda)$ is defined as follows. A character $\lambda \in X^{*}(G)$ induces a $G$-linearization of the trivial line bundle $\AA^1_k \times X$ on $X$ given by $(g,x,s)\mapsto(g\cdot x,\lambda(g)s)$ for all $g\in G$, $x\in X$, $s\in \AA^1_k$.

\begin{proposition} \label{charpic}
Let $H \subset G$ be algebraic groups. Then there is a natural isomorphism:
\[
\Pic^G(G/H)\simeq X^*(H).
\]
\end{proposition}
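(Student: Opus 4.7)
The plan is to construct mutually inverse homomorphisms between $X^*(H)$ and $\Pic^G(G/H)$. Conceptually, the isomorphism reflects the equivalence of stacks $[G\backslash (G/H)] \simeq BH$, since the Picard group of the classifying stack $BH = [H\backslash\Spec k]$ is $X^*(H)$ essentially by definition; the proposition is the concrete incarnation of this identification.

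First I would define $\Phi \colon X^*(H) \to \Pic^G(G/H)$ by the associated bundle construction: a character $\chi\colon H \to \GG_m$ is sent to $L_\chi := G \times^H \AA^1_k$, where $H$ acts on $G \times \AA^1_k$ via $h\cdot(g,t) = (gh^{-1},\chi(h)t)$ and $G$ acts on $L_\chi$ through left translation on the first factor. A routine verification shows that this is a $G$-linearized line bundle on $G/H$ and that $\Phi$ is a group homomorphism (the tensor product $L_\chi \otimes L_{\chi'}$ corresponding to the product of characters).

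Conversely, I would define $\Psi \colon \Pic^G(G/H) \to X^*(H)$ by restriction to the fiber at the base point $\bar e := eH$: the scheme-theoretic stabilizer of $\bar e$ is $H$, which therefore acts linearly on the one-dimensional fiber $L_{\bar e}$ of any $G$-linearized line bundle $L$, producing a character $\Psi(L)$.

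The relation $\Psi \circ \Phi = \id$ is immediate from the construction. The main step, and the only non-formal one, is $\Phi \circ \Psi = \id$: it amounts to reconstructing a $G$-linearized $L$ canonically from its fiber at $\bar e$. Here the $G$-linearization of $L$ supplies an evaluation morphism $G \times L_{\bar e} \to L$, $(g,v)\mapsto g\cdot v$, which is invariant under the $H$-action $h\cdot(g,v) = (gh^{-1}, h\cdot v)$ (where $h$ acts on $L_{\bar e}$ through $\Psi(L)$) and is faithfully flat over $L$. Descent along the principal $H$-bundle $G \to G/H$ then promotes it to a $G$-equivariant isomorphism $L_{\Psi(L)} = G \times^H L_{\bar e} \iso L$, which is the crux of the argument.
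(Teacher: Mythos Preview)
Your proof is correct. In fact, your opening conceptual remark---that $[G\backslash(G/H)]\simeq BH$ and $\Pic(BH)\simeq X^*(H)$---is literally the paper's entire proof: it writes the one-line chain
\[
\Pic^G(G/H)\simeq \Pic([G\backslash (G/H)])\simeq \Pic([1/H])\simeq \Pic^H(1)\simeq X^*(H).
\]
What you then do differently is unpack this equivalence into explicit mutually inverse maps (associated bundle in one direction, fiber-at-the-basepoint character in the other) and verify $\Phi\circ\Psi=\id$ via descent along the $H$-torsor $G\to G/H$. This buys you an explicit description of the isomorphism and avoids invoking the stack formalism as a black box; the paper's version is shorter but relies on the reader accepting the identifications $\Pic^G(X)=\Pic([G\backslash X])$ and $[G\backslash(G/H)]\simeq[1/H]$ without further comment.
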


\begin{proof}
One has $\Pic^G(G/H)\simeq \Pic([G\backslash (G/H)])\simeq \Pic([1/H])\simeq \Pic^H(1)\simeq X^*(H)$.
\end{proof}

\subsection{The space of global sections}

\begin{prop}
Let $G$ be an algebraic group and let $X$ be an irreducible $G$-variety containing an open $G$-orbit $U$. Denote by $\pi:X\to\left[X/G\right]$
the projection. Let $\mathscr{L}$ be a line bundle on the stack $\left[X/G\right]$ and write $L=\pi^{*}\mathscr{L}$. Then:
\end{prop}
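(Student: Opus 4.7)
The plan is to establish three assertions about $H^0([X/G], \mathscr{L})$: first, that it coincides with the space of $G$-invariant global sections of $L$ on $X$; second, that restriction to $U$ gives an injection into the $G$-invariant sections of $L|_U$; and third, that this latter space has dimension at most one, any nonzero element being nowhere-vanishing. Together these will likely give the conclusion of the proposition (a description of $H^0([X/G],\mathscr{L})$ and the fact that nonzero sections do not vanish on $U$), which is exactly what is needed later to produce the Hasse-type sections on the Ekedahl-Oort strata via the orbit description \eqref{Zetaw}.

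First I would use the standard equivalence between quasi-coherent sheaves on the quotient stack $[X/G]$ and $G$-equivariant quasi-coherent sheaves on $X$, which on global sections identifies $H^0([X/G], \mathscr{L})$ with the subspace $H^0(X, L)^G$ of sections commuting with the $G$-linearization of $L$. Next, since $X$ is irreducible and $U \subset X$ is a nonempty open, any section of $L$ vanishing on the dense open $U$ is supported on a proper closed subset of the irreducible $X$ and therefore vanishes identically; hence restriction yields an injection $H^0(X, L)^G \mono H^0(U, L|_U)^G$.

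The third step is the core of the argument. Fix $u_0 \in U$ and set $H := \Stab_G(u_0)$, so that $U \cong G/H$ as $G$-varieties. By Proposition~\ref{charpic}, the $G$-linearized line bundle $L|_U$ is determined up to isomorphism by a character $\lambda \in X^{*}(H)$, namely the character by which $H$ acts on the fiber $L_{u_0}$. A $G$-equivariant section $s$ is determined by its value at $u_0$, which by $G$-equivariance must satisfy $s(u_0) = s(h \cdot u_0) = h \cdot s(u_0) = \lambda(h)\, s(u_0)$ for every $h \in H$; so $s(u_0)$ lies in the $H$-fixed subspace of the one-dimensional fiber $L_{u_0}$. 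This subspace is zero if $\lambda$ is nontrivial and is all of $L_{u_0}$ if $\lambda$ is trivial. Hence $\dim_k H^0(U, L|_U)^G \leq 1$. When nonzero, the section is given explicitly by $s(g \cdot u_0) = g \cdot s(u_0)$, and is nowhere-vanishing on $U$ because the $G$-action on the total space of $L$ preserves the complement of the zero section.

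I do not anticipate a serious obstacle: modulo the standard identification in step one and Proposition~\ref{charpic}, the argument is essentially bookkeeping, combining the density of $U$ in the irreducible $X$ with the transitivity of the $G$-action on $U$. The only point requiring a little care is the verification in step three that evaluation at $u_0$ identifies $G$-equivariant sections of $L|_U$ with $H$-fixed vectors in $L_{u_0}$, but this is a direct consequence of the fact that $U = G \cdot u_0$.
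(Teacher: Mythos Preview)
Your argument is correct and covers parts (i)--(iii) of the proposition directly; part (iv) follows at once since when $\mathscr{L}$ is trivial the constant section $1$ lies in $H^0(X,L)^G$, and by your dimension bound this space is then exactly $k$. The paper itself gives no proof here, only a citation to \cite{KoWd}~Proposition~1.18, so your write-up is in fact more self-contained than the paper. The method is the expected one: identify sections on the stack with $G$-invariant sections on $X$, restrict injectively to the dense open orbit $U\cong G/H$, and reduce to the observation that a $G$-equivariant section on $G/H$ is determined by an $H$-fixed vector in the one-dimensional fibre. One small caveat: to ensure $U\cong G/H$ as schemes (not merely on points) you want $G$ smooth so that the orbit map $G\to U$ is flat; this hypothesis is present in Proposition~\ref{exseqgen} and holds in the application to the zip group $E$, so there is no issue.
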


\begin{enumerate}
\item[\textit{(i)}] \textit{$H^{0}\left(\left[X/G\right],\mathscr{L}\right)$ identifies with $H^{0}\left(X,L\right)^{G}$. In particular, for $\lambda \in X^*(G)$ one has:}
$$H^{0}\left(\left[X/G\right],\Lscr(\lambda)\right)=\{f:X\to k, f(g\cdot x)=\lambda(g)f(x), \ \forall g\in G,x\in X \}.$$
\item[\textit{(ii)}] \textit{The $k-$vector space $H^{0}\left(\left[X/G\right],\mathscr{L}\right)$
has dimension less than 1.}
\item[\textit{(iii)}] \textit{If $H^{0}\left(\left[X/G\right],\mathscr{L}\right)\neq0$ then $\mathscr{L}$
restricts to the trivial line bundle on $\left[U/G\right]$.}
\item[\textit{(iv)}] \textit{If $\mathscr{L}$ is trivial, $H^{0}\left(\left[X/G\right],\mathscr{L}\right)=k$.}
\end{enumerate}

\begin{proof}
See \cite{KoWd} Proposition 1.18.
\end{proof}

\section{Hasse invariants on Ekedahl-Oort strata}

\subsection{Construction}

In this section we construct a canonical non-vanishing section of the Hodge bundle on each Ekedahl-Oort stratum of $S_K$. We do not know if this section extends to the closure, and if it does, what its vanishing locus would be. Since we expect this to be true, we call abusively this section a Hasse invariant of the stratum. In the particular case of the $\mu$-ordinary stratum, it was proved in \cite{KoWd} Theorem 4.12, that this canonical section does extand to the whole Shimura variety (and even to its minimal compactification), and that the non-vanishing locus is exactly the $\mu$-ordinary stratum.

Let $G$ be a reductive group over $\FF_p$, and $\mu:\GG_{m,k}\to G_k$ a minuscule cocharacter. Denote by $(G,P,Q,\varphi)$ the associated zip datum, and by $E$ the attached zip group.

\begin{theorem}\label{H0}
For all $E$-orbit $C\subset G$, the Picard group $\Pic^E(C)$ is finite. Denote by $N_C$ its exponent. Then for all $d\geq 1$ and all $\lambda \in X^*(E)$, the space of global sections
$$H^{0}\left(\left[E \backslash C\right],\Lscr(\lambda)^{\otimes N_C d}\right)$$
is one-dimensional.
\end{theorem}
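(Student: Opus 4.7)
The plan is to combine Proposition 2.2 with Proposition 2.3(iv), reducing the entire theorem to a structural finiteness statement about $E$-stabilizers inside $C$.

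Fix $w \in {}^{J}W$ so that $C = O^{w} = E\cdot(g_{0}\dot w)$, and set $H := \Stab_{E}(g_{0}\dot w)$. The orbit map $E \to C$ is smooth and surjective, so $H$ is a smooth closed subgroup scheme of $E$ and there is an $E$-equivariant isomorphism $C \simeq E/H$. Proposition 2.2 then gives
$$\Pic^{E}(C) \simeq \Pic^{E}(E/H) \simeq X^{*}(H).$$

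The central structural point is that $X^{*}(H)$ is finite. I would dissect $H$ via the extension $1 \to R_{u}(P) \times R_{u}(Q) \to E \to L \to 1$. The kernel of $H \to L$ is
$$\{(u,v) \in R_{u}(P) \times R_{u}(Q) \,:\, u\, g_{0}\dot w = g_{0}\dot w\, v\};$$
the relation $v = (g_{0}\dot w)^{-1} u \,(g_{0}\dot w)$ expresses $v$ as a regular function of $u$ and cuts out a connected unipotent subgroup of $R_{u}(P) \times R_{u}(Q)$. The image of $H$ in $L$ is the fixed-point set of a Frobenius-like map, built from $\varphi$ and conjugation by $g_{0}\dot w$, acting on a suitable Levi-type subvariety of $L$; by a Lang--Steinberg argument this fixed-point set is a finite algebraic group. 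This is the structural content behind the PWZ classification of $E$-orbits. Hence $H^{\circ}$ is unipotent, $X^{*}(H^{\circ}) = 0$, and $X^{*}(H) = X^{*}(H/H^{\circ})$ is finite; let $N_{C}$ denote its exponent.

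With finiteness established, the remaining assertion is formal. For any $\lambda \in X^{*}(E)$, the restriction $\Lscr(\lambda)|_{C}$ defines a class in the finite group $\Pic^{E}(C)$, so $\Lscr(\lambda)^{\otimes N_{C} d}|_{C}$ is $E$-equivariantly trivial for every $d \geq 1$. Proposition 2.3(iv), applied to this trivial line bundle on $[E \backslash C]$, then yields
$$H^{0}([E \backslash C], \Lscr(\lambda)^{\otimes N_{C} d}) = k,$$
which is a one-dimensional $k$-vector space.

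The principal obstacle is the unipotency of $H^{\circ}$: the dimension formula $\dim H = \dim R_{u}(P_{-}) - \ell(w)$ is routine from $\dim O^{w} = \ell(w) + \dim P$ and $\dim E = \dim G$, but ruling out a torus quotient of $H^{\circ}$ requires unpacking the explicit Frobenius-and-Weyl-element description of the stabilizer, which is where the combinatorial input from the zip datum really enters the argument.
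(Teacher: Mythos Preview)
Your approach is essentially the paper's: both identify $\Pic^E(C)$ with the character group of the stabilizer via Proposition~\ref{charpic}, invoke the Pink--Wedhorn--Ziegler structure theorem to show this group is finite, and then read off one-dimensionality from the triviality of $\Lscr(\lambda)^{\otimes N_C d}$ on $[E\backslash C]$. The paper simply cites \cite{PWZ1}~Theorem~8.1 for the fact that the reduced stabilizer has the form $U_x \rtimes H_x$ with $U_x$ connected unipotent and $H_x$ finite, rather than sketching the Lang--Steinberg mechanism as you do; and it extracts the section from the exact sequence of Proposition~\ref{exseqgen} rather than from part~(iv) of the proposition on global sections. These are cosmetic differences.

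There is one genuine oversight. Your claim that the orbit map $E \to C$ is smooth, and hence that $H$ is a smooth subgroup scheme, is circular: the orbit map is smooth exactly when the scheme-theoretic stabilizer is, and in characteristic $p$ there is no a priori reason for this. The paper is careful on this point: it writes $A_x$ for the scheme-theoretic stabilizer, passes to $A_{x,\mathrm{red}}$ before invoking \cite{PWZ1}~Theorem~8.1, and then uses the short exact sequence
\[
1 \to A_{x,\mathrm{red}} \to A_x \to A_x/A_{x,\mathrm{red}} \to 1
\]
together with the finiteness of the quotient to deduce that $X^*(A_x)$ is finite from the finiteness of $X^*(A_{x,\mathrm{red}})$. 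Your argument is easily repaired along the same lines, but as written it asserts smoothness without justification.
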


\begin{proof}
We apply Proposition \ref{exseqgen} to the $E$-variety $C$. Clearly $\GG_m(C)^E=k^\times$. Hence we get an exact sequence:
\begin{equation} \label{exseqproof}
1 \to \Ecal(C) \to X^{*}(E) \to \Pic^E(C) \to \Pic(C)
\end{equation}
Let $x$ be an arbitrary element of $C$. The map $E\to C$, $e\mapsto e\cdot x$ identifies $C$ with the quotient $E/A_x$, where $A_x$ is the scheme-theoretic stabilizer of $x$ in $E$. We have an exact sequence
$$1\to A_{x,{\rm red}}\to A_x \to A_x /A_{x,{\rm red}} \to 1$$
where $A_x /A_{x,{\rm red}}$ is a finite group scheme. Hence we have an exact sequence
$$0\to X^*(A_x /A_{x,{\rm red}}) \to X^*(A_x)\to X^*(A_{x,{\rm red}}).$$
By \cite{PWZ1} Theorem 8.1, the group $A_{x,{\rm red}}$ has the form $U_x \rtimes H_x$ where $U_x$ is unipotent and $H_x$ finite. We deduce that $X^*(A_{x,{\rm red}})$ is finite, and hence so is $X^*(A_x)$. It follows from Proposition \ref{charpic} that $\Pic^E(C)$ is finite. Let $N_C$ be its exponent.

For all $d\geq 1$, the character $N_C d \lambda$ maps to zero in $\Pic^E(C)$. Therefore there exists a function $f\in \Ecal(C)$ mapping to $N_C d \lambda$. By definition, this is a non-vanishing function on $C$ satisfying the relation $f(e\cdot x)=\lambda(e)^{N_C d}f(x), \ \forall e\in E,x\in C$, so it is a nonzero global section of $\Lscr(\lambda)^{\otimes N_C d}$. This concludes the proof.
\end{proof}

\begin{remark}\label{rmkN}
For a fixed character $\lambda \in X^*(E)$, let $N_C(\lambda)$ be the order of $\Lscr(\lambda)$ in $\Pic^E(C)$. The set of integers $r$ such that $H^{0}\left(\left[E \backslash C\right],\Lscr(\lambda)^{\otimes r}\right)\neq 0$ is the subgroup of $\ZZ$ generated by $N_C(\lambda)$.
\end{remark}

The first projection $E\to P$ induces an isomorphism $X^*(E)=X^*(P)$. A character $\lambda\in X^*(E)=X^*(P)$ is said to be \emph{ample} if the associated line bundle on $G/P$ is ample, see Definition 3.2 in \cite{KoWd}. This defines a cone in $X^*(E)$. The following result is a reformulation of Theorem 3.8 in loc. cit.

\begin{theorem}\label{mainthm}
Let $U\subset G$ denote the open $E$-orbit of $G$. Let $\lambda \in X^*(E)$ be an ample character. Then one has
$$H^{0}\left(\left[E \backslash U\right],\Lscr(\lambda)^{\otimes n}\right)=H^{0}\left(\left[E \backslash G\right],\Lscr(\lambda)^{\otimes n}\right)$$
for all $n\geq 1$. For $n=N_Ud$, $d\geq 1$, this space is one-dimensional and any nonzero element induces a function $G\to k$ which vanishes exactly on the complement of $U$. 
\end{theorem}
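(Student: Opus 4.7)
The plan is to combine Theorem \ref{H0} (applied to the open orbit $C = U$) with a general extension result for sections of ample line bundles on $[E\backslash G]$, namely the Hasse invariant construction of Theorem 3.8 in \cite{KoWd}. I would first prove the equality
\[
H^{0}\left(\left[E \backslash U\right],\Lscr(\lambda)^{\otimes n}\right) = H^{0}\left(\left[E \backslash G\right],\Lscr(\lambda)^{\otimes n}\right)
\]
by establishing both inclusions. The restriction map from right to left is injective because a global section of $\Lscr(\lambda)^{\otimes n}$ on $[E\backslash G]$ is, by the proposition preceding Theorem \ref{H0}, a regular $E$-semi-invariant function $f\colon G \to k$ of weight $\lambda^{n}$, and since $G$ is irreducible with $U$ open and dense, such $f$ is determined by its restriction to $U$. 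Surjectivity is the core of the argument and is where ampleness of $\lambda$ enters; I would appeal to Theorem 3.8 of \cite{KoWd}, which produces, for ample $\lambda$, a natural non-vanishing global section $\mathrm{Ha}_{\lambda} \in H^{0}([E\backslash G],\Lscr(\lambda)^{\otimes m})$ for some $m\geq 1$, whose zero locus on $G$ is exactly $G\setminus U$. Given a section $s$ on $[E\backslash U]$, I would view it as a rational function on $G$; its poles could only lie along the codimension-one $E$-orbit closures in $G\setminus U$, and comparing orders of vanishing against $\mathrm{Ha}_{\lambda}$ along these divisors forces the poles to be absent, yielding a regular extension of $s$ to $G$.

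For the second assertion, fix $n = N_{U}d$ with $d\geq 1$. Theorem \ref{H0} applied to $C = U$ gives
\[
\dim H^{0}([E\backslash U], \Lscr(\lambda)^{\otimes n}) = 1,
\]
and the equality just proved promotes this to $\dim H^{0}([E\backslash G], \Lscr(\lambda)^{\otimes n}) = 1$. A nonzero element is a regular function $f\colon G \to k$, automatically non-vanishing on $U$ by the proof of Theorem \ref{H0}. To identify its zero locus on $G\setminus U$, I would argue by uniqueness: raising $\mathrm{Ha}_{\lambda}$ to a suitable power produces a nonzero element of the same one-dimensional space, so $f$ is proportional to this power and vanishes exactly where $\mathrm{Ha}_{\lambda}$ does, namely on $G\setminus U$.

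The main obstacle is the surjectivity of the restriction map: extending a regular $E$-semi-invariant function from $U$ to $G$ genuinely requires ampleness of $\lambda$, since without it a pole along a codimension-one $E$-stable divisor is not ruled out. Once this extension is in hand, the one-dimensionality and the explicit vanishing locus follow formally from Theorem \ref{H0} and the uniqueness of sections in the resulting one-dimensional spaces.
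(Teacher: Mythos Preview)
Your proposal is correct and follows essentially the same route as the paper: injectivity of restriction is immediate, and the extension step together with the identification of the vanishing locus is supplied by \cite{KoWd}~Theorem~3.8. The only difference is that the paper applies that theorem directly to each nonzero section on $U$ (viewed as an element of $\Ecal(U)$ mapping to $n\lambda$), whereas you first extract a single Hasse invariant $\mathrm{Ha}_\lambda$ from that theorem and then bootstrap by noting that $s^m/\mathrm{Ha}_\lambda^{\,n}$ is $E$-invariant on the open orbit, hence constant---a cosmetic rather than substantive difference.
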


\begin{proof}
The natural pull-back map $H^{0}\left(\left[E \backslash G\right],\Lscr(\lambda)^{\otimes n}\right) \to H^{0}\left(\left[E \backslash U\right],\Lscr(\lambda)^{\otimes n}\right)$ is clearly injective. Since $\Pic^E(U)$ is finite, we have an isomorphism
$$\Ecal(U)_\QQ \simeq X^*(E)_\QQ.$$
The space $H^{0}\left(\left[E \backslash U\right],\Lscr(\lambda)^{\otimes n}\right)$ is one-dimensional if and only if the function in $\Ecal(U)_\QQ$ corresponding to $n \lambda$ is in $\Ecal(U) \subset \Ecal(U)_\QQ$. In this case the function extends to a regular function on $G$ which vanishes exactly outside $U$, by \cite{KoWd} Theorem 3.8.
\end{proof}

Now let us return to the notations of section 1. For an element $w\in \leftexp{J}{W}$, denote by $N_w$ the integer associated to the $E$-orbit $O^w$ as in Theorem \ref{H0}. The map (\ref{Zetaw}) induces by pullback a map
$$H^{0}\left(\left[E \backslash O^w\right],\Lscr(\lambda)^{\otimes N_w d}\right)\to H^{0}\left(S^w,\zeta^*\Lscr(\lambda)^{\otimes N_w d}\right)$$
As explained in \cite{KoWd} 4.6 and in the proof of Theorem 4.12, there is a character $\lambda_{\omega_S}$ of $E$ such that
$$\zeta^*\Lscr(\lambda_{\omega_S})=\omega_S.$$
Thus we get a non-vanishing section $H_w$ of $\omega_S^{N_w d}$ over $S_w$ (well-defined up to a scalar), which proves Theorem 1. Note that this construction depends only on the choice of the Siegel embedding. Therefore we call $H_w$ a canonical Hasse invariant for $S_w$. It is a difficult question whether or not $H_w$ extends to the closure $\overline{S^w}$. In an upcoming article, we will show this for Hilbert-Blumenthal Shimura varieties and some unitary cases.

\subsection{Functoriality}

Let $f:G_1 \to G_2$ be a morphism of connected reductive groups over $\FF_p$. Let $\mu_1:\GG_{m,k}\to G_{1,k}$ be a minuscule cocharacter, and set $\mu_2:=f\circ \mu_1$. For $i=1,2$, denote by $(G_i,P_i,Q_i,\varphi)$ the zip datum attached to $\mu_i$. Denote by $E_1$ and $E_2$ respectively the corresponding zip groups. The map $f$ induces naturally a map $E_1\to E_2$, which we denote again by $f$. We get a map of stacks:
$$[E_1\backslash G_{1,k}]\longrightarrow [E_2 \backslash G_{2,k}].$$ 
Let $C_1$ be an $E_1$-orbit in $G_{1,k}$ and let $C_2$ be the $E_2$-orbit containing $f(C_1)$. Let $\lambda\in X^*(E_2)$ be a character of $E_2$ and denote by $N_1(\lambda \circ f)$ and $N_2(\lambda)$ the integers attached to the pairs $(C_1,\lambda \circ f)$ and $(C_2,\lambda)$ as in Remark \ref{rmkN}. We get an map:
$$\tilde{f}:H^{0} \left( \left[E_2 \backslash C_2\right],\Lscr(\lambda)^{\otimes N_2(\lambda)} \right) \to H^{0} \left(\left[E_1 \backslash C_1\right],\Lscr(\lambda\circ f)^{\otimes N_2(\lambda)} \right)$$
One sees readily that this map is injective. Since the space on the left has dimension one, we deduce that it is an isomorphism. In particular the integer $N_1(\lambda \circ f)$ divides $N_2(\lambda)$.

Assume now that $f$ is an embedding and that $C_1$ is the open $E_1$-orbit in $G_1$. Define again $C_2$ to be the $E_2$-orbit containing $f(C_1)$. Let $\lambda \in X^*(E_2)$ be an ample character. Then $\lambda \circ f$ is again ample (Remark 3.5 in loc. cit.). We deduce the following isomorphism:
$$H^{0} \left( \left[E_2\backslash C_2\right],\Lscr(\lambda)^{\otimes N_2(\lambda)} \right) \simeq H^{0} \left(\left[E_1 \backslash G_1\right],\Lscr(\lambda\circ f)^{\otimes N_2(\lambda)} \right).$$
Any nonzero element $H$ in this space induces a function $H:G_1\to k$ which vanishes exactly outside $C_1$ by Theorem \ref{mainthm}. But by definition it does not vanish on the preimage of $C_2$, so we get the following:

\begin{proposition}\label{orbs}
Assume that $f$ is an embedding. Let $C_1$ denote the open $E_1$-orbit in $G$, and let $C_2$ be the $E_2$-orbit containing $C_1$. Then we have the following:
$$C_1=f^{-1}(C_2).$$
\end{proposition}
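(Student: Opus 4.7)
The plan is to finish the argument begun in the paragraph preceding the statement: use the one-dimensional space of sections constructed there and compare where a generator vanishes on the two sides of $f$.

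Concretely, the inclusion $C_1 \subseteq f^{-1}(C_2)$ is tautological from the very definition of $C_2$ as the $E_2$-orbit containing $f(C_1)$. For the reverse direction, I would fix an ample character $\lambda \in X^*(E_2)$, so that $\lambda \circ f$ is again ample, and choose a nonzero element
\[
H \in H^0\bigl([E_2\backslash C_2],\Lscr(\lambda)^{\otimes N_2(\lambda)}\bigr) \simeq H^0\bigl([E_1\backslash G_1],\Lscr(\lambda\circ f)^{\otimes N_2(\lambda)}\bigr).
\]
Viewed on the left, $H$ is a nowhere-vanishing regular function on $C_2$ (satisfying the usual equivariance under $E_2$), because it trivializes a torsion line bundle on $[E_2\backslash C_2]$; hence $H \circ f$ has no zeros on $f^{-1}(C_2)$. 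Viewed on the right, $H$ corresponds by Theorem \ref{mainthm}, applied to the ample character $\lambda \circ f$, to a regular function on $G_1$ whose zero locus is exactly $G_1 \setminus C_1$.

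Since the pullback map $\tilde f$ between the two section spaces is literally precomposition of scalar-valued functions with $f$ (under the functional description of sections as $\lambda$-equivariant functions on the total space), the two descriptions of $H$ agree as functions on $f^{-1}(C_2)$. Combining the non-vanishing on $f^{-1}(C_2)$ with the vanishing outside $C_1$ forces $f^{-1}(C_2) \subseteq C_1$, giving the equality.

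The only non-trivial step, and hence the natural place where one should be careful, is verifying that the abstract isomorphism $\tilde f$ really is literal composition of scalar functions; but this is immediate from the functional description of global sections on $[E\backslash X]$ for a character $\lambda$, so the whole argument amounts to organizing what has already been set up in the preceding paragraph.
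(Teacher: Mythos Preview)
Your argument is correct and is essentially identical to the paper's own proof: both pick an ample $\lambda\in X^*(E_2)$, use the isomorphism $H^{0}([E_2\backslash C_2],\Lscr(\lambda)^{\otimes N_2(\lambda)})\simeq H^{0}([E_1\backslash G_1],\Lscr(\lambda\circ f)^{\otimes N_2(\lambda)})$, and observe that a nonzero element is non-vanishing on $f^{-1}(C_2)$ (as a pullback from $C_2$) while vanishing exactly on $G_1\setminus C_1$ by Theorem~\ref{mainthm}. You have merely made explicit the tautological inclusion and the fact that $\tilde f$ is composition with $f$, which the paper leaves implicit.
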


\section{Consequences}

\subsection{Nonemptiness of Ekedahl-Oort strata for projective Shimura varieties of Hodge-type}

In this paragraph we assume that S is a projective variety. We show that all Ekedahl-Oort strata must be nonempty. Since the map $\zeta$ is open, it suffices to prove that the superspecial locus is nonempty, that is, the unique Ekedahl-Oort stratum of dimension zero.

\begin{lemma} \label{amplesec}
Assume that $S$ is projective. Let $S^w$ be an Ekedahl-Oort stratum of positive dimension. Then $S^w$ is not closed.
\end{lemma}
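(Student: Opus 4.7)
The plan is to argue by contradiction, exploiting the tension between the ampleness of the Hodge bundle and the existence of the Hasse invariant constructed in Theorem 1.

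Suppose for contradiction that $S^w$ is closed in $S$. Since $S$ is projective by assumption, $S^w$ would then be a projective scheme. Because $\omega_S$ is ample on $S$ (as recalled in \S1.1, citing \cite{KoWd} Proposition 4.1), its restriction $\omega_S|_{S^w}$ to the closed subscheme $S^w$ would also be ample.

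On the other hand, by Theorem 1 there exists an integer $N_w \geq 1$ and a non-vanishing global section of $\omega_S^{\otimes N_w}$ on $S^w$. A non-vanishing section of a line bundle on a reduced scheme trivializes it, so $\omega_S^{\otimes N_w}|_{S^w} \isom \Ocal_{S^w}$. In other words, $\omega_S|_{S^w}$ is a torsion element of $\Pic(S^w)$.

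The main (and only) point is now the standard fact that a torsion line bundle on a projective scheme of positive dimension cannot be ample: if $\Lscr$ is ample on a projective scheme $Y$ of positive dimension, then for any curve $C \subset Y$ the degree $\deg(\Lscr|_C) > 0$, contradicting $\Lscr^{\otimes N} \isom \Ocal_Y$. (Equivalently, an ample line bundle has positive self-intersection $\Lscr^{\dim Y}$.) Since we assumed $\dim S^w > 0$, this gives the desired contradiction. I expect no real obstacle here; the statement is essentially a direct juxtaposition of Theorem 1 with the ampleness of $\omega_S$, and the rest of Corollary 2 will then follow by an inductive argument on strata together with the openness/flatness properties of $\zeta$.
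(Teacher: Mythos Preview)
Your proof is correct and is essentially the same argument as the paper's, just packaged slightly differently. The paper's one-line proof cites Corollary~1 (quasi-affineness of $S^w$, itself deduced from Theorem~1 via the torsion property of $\omega_S|_{S^w}$) and observes that a positive-dimensional quasi-affine scheme cannot be projective; you bypass the word ``quasi-affine'' and argue directly that $\omega_S|_{S^w}$ would be both ample and torsion on a positive-dimensional projective scheme, which is the same contradiction unwound one step.
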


\begin{proof}
Since $S^w$ is quasi-affine, it cannot be projective unless it is zero-dimensional.
\end{proof}

We deduce that for any nonempty Ekedahl-Oort stratum of dimension $d \geq 1$, there is a nonempty Ekedahl-Oort stratum of dimension $ < d$ in its closure. Using this argument
recursively (begining at the $\mu$-ordinary stratum, which is nonempty), we deduce that there is a nonempty Ekedahl-Oort stratum of dimension $0$, which must be the superspecial one. This concludes the proof.

\begin{remark}\
\begin{enumerate}[(a)]
\item This proof will work for any Shimura variety of Hodge-type, provided that the closure of an Ekedahl-Oort stratum in the minimal compactification $S_K^{\rm min}$ intersects the boundary in a closed subset of codimension $\geq 2$.
\item It follows from \cite{Ni} that every Newton stratum contains a fundamental Ekedahl-Oort stratum. The argument is completely group-theoretic, and hence applies also to Hodge type Shimura varieties. We deduce that all Newton strata of a projective Shimura variety of Hodge type are nonempty as well.
\end{enumerate}
\end{remark}

\subsection{Embeddings of Shimura varieties}

We set $\overline{\Lambda}:=\Lambda \otimes_{\ZZ_p} \FF_p$, endowed with the symplectic form induced by $\psi$. We denote by $G_0$ the group $\GSp(\bar\Lambda)$. As explained in \cite{KoWd} 4.5, the embedding (\ref{EqEmbGroup}) induces a commutative diagram:
$$\xymatrix@1@M3pt@C6pc{
S_K \ar[d] \ar[r]^-{\zeta} &  \GZip^{\chi} \ar[d]^-\iota \\
S_{\tilde{K}}({\rm GSp}(\Lambda),S^{\pm})_\kappa \ar[r]^-{\zeta_0} & G_0-{\tt Zip}^{\iota\circ\chi}
}$$
where $S_{\tilde{K}}({\rm GSp}(\Lambda),S^{\pm})$ denotes the special fiber of the Siegel modular variety $\Scal_{\Ktilde}(\GSp(\Lambda),S^{\pm})$ and $\zeta_0$ the corresponding zip map. See diagram (4.9) in loc. sit. for details. The cocharacter $\iota\circ \chi$ of $G_0$ gives rise to a zip datum $(G_0,P_0,Q_0,\varphi)$ and we get a map between the quotient stacks:
$$[E\backslash G]\longrightarrow [E_0 \backslash G_0].$$
Denote by $U$ the open $E$-orbit in $G$ and by $U_0$ the $E_0$-orbit containing $\iota(U)$. We deduce from Proposition \ref{orbs} that $U=\iota^{-1}(U_0)$, and the following result follows:

\begin{corollary3} 
Let $S^\mu$ be the $\mu$-ordinary Ekedahl-Oort stratum, and let $S_0^\mu$ be the Ekedahl-Oort stratum of the Siegel modular variety $S_{\tilde{K}}({\rm GSp}(\Lambda),S^{\pm})$ containing the image of $S^\mu$ by the embedding $\iota:S_K\to S_{\tilde{K}}({\rm GSp}(\Lambda),S^{\pm})$. Then one has the equality:
$$S^\mu=\iota^{-1}(S_0^\mu).$$
In particular the $\mu$-ordinary locus $S^\mu$ is entirely determined in $S$ by the isomorphism class of the $p$-divisible group $A[p^\infty]$ (or by the group scheme $A[p]$), where $A$ is the abelian variety over $k$ attached to a $k$-point in $S_K$.
\end{corollary3}

\end{document}